 \numberwithin{equation}{section}
\newtheorem{theorem}{Theorem}[section]
\newtheorem{lemma}{Lemma}[section]
\theoremstyle{definition}
\theoremstyle{remark}
\newtheorem{remark}{Remark}[section]
\def\proof{{\indent\it Proof.}}
\def\endproof{\hfill\hbox{$\sqcup$}\llap{\hbox{$\sqcap$}}\medskip}
\begin{document}
\title[Bounds for eigenvalue ratios]
{Bounds for eigenvalue ratios of the Laplacian*}
\author [Q. -M. Cheng and X. Qi]{ Qing-Ming Cheng  and Xuerong Qi}
\address{Qing-Ming Cheng\\  Department of Mathematics, Graduate School of Science and Engineering,
Saga University, Saga 840-8502,  Japan, cheng@cc.saga-u.ac.jp}
\address{Xuerong Qi\\  Department of Mathematics,  Graduate School  of Science and Engineering,
Saga University, Saga 840-8502,  Japan, qixuerong609@gmail.com}
\subjclass{}
\renewcommand{\thefootnote}{\fnsymbol{footnote}}

\footnotetext{2010 \textit{Mathematics Subject Classification}:
35P15, 58C40}

\footnotetext{* Research partially supported by a Grant-in-Aid for
Scientific Research from JSPS.}

\footnotetext{{\it Key words and phrases}:    Laplacian, eigenvalue,
Dirichlet eigenvalue problem}

\begin{abstract}

For a bounded domain $\Omega$ with a piecewise smooth boundary  in
an $n$-dimensional Euclidean space $\mathbf{R}^{n}$, we study
eigenvalues of the Dirichlet eigenvalue problem of the Laplacian.
First we give a general inequality for eigenvalues of the Laplacian.
As an application, we study lower order eigenvalues of the Laplacian
and derive the ratios of lower order eigenvalues of the Laplacian.
\end{abstract}
\maketitle
\renewcommand{\sectionmark}[1]{}

\section{introduction}

 Let $\Omega \subset \mathbf{R}^{n}$ be a bounded domain with a piecewise smooth boundary
 $\partial \Omega$    in an
 $n$-dimensional Euclidean space $\mathbf{R}^{n}$. We consider the
 following Dirichlet eigenvalue problem of the Laplacian:
\begin{equation}
\left \{ \aligned \Delta u&=-\lambda u \quad \text{in \ \ $\Omega$}, \\
u&=0 \quad \ \ \ \ \text{on $\partial \Omega$}. \endaligned \right.
\end{equation}
 It is well known that the spectrum of this problem is   real
 and  discrete:
 $$
 0<\lambda_1< \lambda_2\leq \lambda_3\leq \cdots\nearrow\infty,
$$
where each $\lambda_i$ has finite multiplicity which is repeated
according to its multiplicity.

The investigation of universal bounds for eigenvalues of the
eigenvalue problem (1.1)
 was initiated by Payne, P\'olya and
Weinberger \cite{PPW1}. In 1956, they showed that for
$\Omega\subset\mathbf{R}^{2}$, the ratio of the first two
eigenvalues satisfies
\begin{equation}
\frac{\lambda_2}{\lambda_1}\leq3;
\end{equation}
they further conjectured that one could replace the value 3 here by
the value that $\frac{\lambda_2}{\lambda_1}$ assumes when $\Omega$
is a disk, approximately 2.539. With respect to the conjecture of
Payne, P\'olya and Weinberger, many mathematicians studied it. For
examples, Brands \cite{B}, de Vries \cite{dV}, Chiti \cite{Chi},
Hile and Protter \cite{HP}, Marcellini \cite{Ma} and so on. Finally
Ashbaugh and Benguria \cite{AB2} (cf. \cite{AB1} and \cite{AB3})
solved this conjecture.

 For $\Omega\subset\mathbf{R}^{2}$, Payne, P\'olya and
Weinberger \cite{PPW1} also showed that
\begin{equation}
\frac{\lambda_2+\lambda_3}{\lambda_1}\leq 6.
\end{equation}
Subsequent to the paper of Payne, P\'olya and Weinberger, many
mathematicians improved the constant 6 in (1.3). Specifically, in
1964, Brands \cite{B} obtained $3+\sqrt7$; then in 1980, Hile and
Protter \cite{HP} obtained 5.622; Marcellini \cite{Ma} obtained
$\frac{15+\sqrt{345}}{6}$; and
 very recently, Chen and Zheng \cite{CZ} have obtained 5.3507.
 Furthermore, under the condition
$\frac{\lambda_2}{\lambda_1}\geq2-\frac{\lambda_1}{\lambda_4},$ Chen
and Zheng have also proved that
\begin{equation}
\frac{\lambda_2+\lambda_3}{\lambda_1}\leq
5+\frac{\lambda_1}{\lambda_4}.
\end{equation}

 For a
general dimension $n\geq 2$, Ashbaugh and Benguria \cite{AB4} (cf.
Thompson \cite{T} ) proved
 \begin{equation}
 \frac{\lambda_2+\lambda_3+\cdots +\lambda_{n+1}}{\lambda_1}\leq
 n +4.
 \end{equation}
 Furthermore,  Ashbaugh and Benguria \cite{AB4} (cf. Hile and Protter \cite{HP} )
improved the result (1.5) to
 \begin{equation}
 \frac{\lambda_2+\lambda_3+\cdots +\lambda_{n+1}}{\lambda_1}\leq
 n+3 +\dfrac{\lambda_1}{\lambda_2}.
 \end{equation}
In this paper, by making use of the fact that eigenfunctions form an
orthonormal  basis of $L^2(\Omega)$ in place of the Rayleigh-Ritz
formula, we obtain a general inequality for eigenvalues of the
Laplacian. As an application,  we study lower order eigenvalues of
the Laplacian and obtain the following:

\begin{theorem}  Let $\Omega$ be a bounded domain with a piecewise smooth boundary $\partial\Omega$ in an
$n$-dimensional Euclidean space $\mathbf{R}^{n}$. Then, for $ 1\leq
i\leq n+2$, eigenvalues of the eigenvalue problem {\rm (1.1)}
satisfy at least one of the following:
\begin{enumerate}
\item
$\dfrac{\lambda_2}{\lambda_1}< 2-\dfrac{\lambda_1}{\lambda_i},$
\item
$\dfrac{\lambda_2+\lambda_3+\cdots
+\lambda_{n+1}}{\lambda_1}\leq
 n+3
 +\dfrac{\lambda_1}{\lambda_i}.$
   \end{enumerate}
\end{theorem}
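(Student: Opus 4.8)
The plan is to run the ``general inequality'' of the previous section on the trial functions $x_{\alpha}u_{1}$, reduce the statement to a refined upper bound for $\lambda_{2}+\cdots+\lambda_{n+1}$, and then conclude by a case distinction according to whether (1) holds.

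First I set up the algebraic machinery. Let $u_{1},u_{2},\dots$ be an $L^{2}(\Omega)$-orthonormal basis of eigenfunctions, $\Delta u_{j}=-\lambda_{j}u_{j}$. Translating the origin, I may assume $\int_{\Omega}x_{\alpha}u_{1}^{2}=0$, so that $\varphi_{\alpha}:=x_{\alpha}u_{1}$ is orthogonal to $u_{1}$; expand $\varphi_{\alpha}=\sum_{j\ge 2}a_{\alpha j}u_{j}$ with $a_{\alpha j}=\int_{\Omega}x_{\alpha}u_{1}u_{j}$. From the commutator identity $\Delta(x_{\alpha}u_{1})=-\lambda_{1}x_{\alpha}u_{1}+2\,\partial_{\alpha}u_{1}$, integrating against $u_{j}$ gives $(\lambda_{j}-\lambda_{1})a_{\alpha j}=-2b_{\alpha j}$ with $b_{\alpha j}:=\int_{\Omega}(\partial_{\alpha}u_{1})u_{j}$, and computing $\|\nabla\varphi_{\alpha}\|^{2}$ in two ways yields, for each $\alpha$,
\[
\sum_{j\ge 2}(\lambda_{j}-\lambda_{1})a_{\alpha j}^{2}=1 .
\]
Since $(\lambda_{j}-\lambda_{1})^{2}a_{\alpha j}^{2}=4b_{\alpha j}^{2}$ and $\sum_{\alpha,j}b_{\alpha j}^{2}=\|\nabla u_{1}\|^{2}=\lambda_{1}$, one also gets $\sum_{\alpha}\sum_{j\ge 2}(\lambda_{j}-\lambda_{1})^{2}a_{\alpha j}^{2}=4\lambda_{1}$, and more precisely $\sum_{j}(\lambda_{j}-\lambda_{1})a_{\alpha j}a_{\beta j}=\delta_{\alpha\beta}$. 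These identities are the analytic input; rotating the coordinate axes by an $R\in O(n)$ replaces $(a_{\alpha j})$ by $(RA)_{\alpha j}$ and leaves all of them intact, so I may further assume that the matrix $(a_{\alpha j})_{1\le\alpha\le n,\,2\le j\le n+1}$ is triangular, i.e. $\varphi_{\alpha}\perp u_{1},\dots,u_{\alpha}$.

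Second, the reduction. With the triangular normalization only indices $j\ge\alpha+1$ occur in $\varphi_{\alpha}$, so splitting off $\lambda_{\alpha+1}-\lambda_{1}$ in the displayed identity gives
\[
(\lambda_{\alpha+1}-\lambda_{1})\,\|\varphi_{\alpha}\|^{2}=1-\sum_{j\ge\alpha+2}(\lambda_{j}-\lambda_{\alpha+1})a_{\alpha j}^{2}\le 1 ,
\]
hence $\sum_{i=2}^{n+1}(\lambda_{i}-\lambda_{1})\le\sum_{\alpha}\|\varphi_{\alpha}\|^{-2}$; a Cauchy--Schwarz estimate of $\|\varphi_{\alpha}\|^{2}$ against the quadratic identity recovers $\sum_{i=2}^{n+1}(\lambda_{i}-\lambda_{1})\le 4\lambda_{1}$, which is (1.5). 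The point of this step is to refine it: combining the quantitative version above with the gap bound $\lambda_{j}-\lambda_{1}\ge\lambda_{n+2}-\lambda_{1}\ge\lambda_{i}-\lambda_{1}$ valid for $j\ge n+2$ (this is the only place the hypothesis $i\le n+2$ enters) and summing over $\alpha$, I expect an inequality of the shape
\[
\sum_{i=2}^{n+1}(\lambda_{i}-\lambda_{1})\le 4\lambda_{1}-\Theta ,
\]
where $\Theta\ge 0$ is an explicit quantity increasing in the gap $\lambda_{2}-\lambda_{1}$ and in $\lambda_{1}/\lambda_{i}$.

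Finally, the dichotomy. If $\dfrac{\lambda_{2}}{\lambda_{1}}<2-\dfrac{\lambda_{1}}{\lambda_{i}}$ then (1) holds and we are done. Otherwise $\lambda_{2}-\lambda_{1}\ge\lambda_{1}\bigl(1-\tfrac{\lambda_{1}}{\lambda_{i}}\bigr)=\tfrac{\lambda_{1}(\lambda_{i}-\lambda_{1})}{\lambda_{i}}$, and since $\lambda_{j}\ge\lambda_{2}$ for every $j\ge 2$ this simultaneously lower-bounds every eigenvalue gap $\lambda_{j}-\lambda_{1}$; inserting it into $\Theta$ should give $\Theta\ge\lambda_{1}-\tfrac{\lambda_{1}^{2}}{\lambda_{i}}$, whence $\sum_{i=2}^{n+1}(\lambda_{i}-\lambda_{1})\le 3\lambda_{1}+\tfrac{\lambda_{1}^{2}}{\lambda_{i}}$, which is exactly (2) after dividing by $\lambda_{1}$ and adding $n$. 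The main obstacle is the construction of the refined inequality $\sum_{i=2}^{n+1}(\lambda_{i}-\lambda_{1})\le 4\lambda_{1}-\Theta$ with a correction term that is simultaneously (a) manifestly nonnegative and (b) of size at least $\lambda_{1}-\lambda_{1}^{2}/\lambda_{i}$ once $\lambda_{2}-\lambda_{1}$ is bounded below as above; this requires a careful, slack-free combination of the identity for $(\lambda_{\alpha+1}-\lambda_{1})\|\varphi_{\alpha}\|^{2}$ with a sharp bound on $\|\varphi_{\alpha}\|^{2}$ in terms of $\|\partial_{\alpha}u_{1}\|^{2}$, the splitting of the $j$-sum at $j=n+2$, and the summation over $\alpha$. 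The remaining steps are routine algebra.
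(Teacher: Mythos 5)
Your preparatory material is correct and coincides with the first part of the paper's argument: the coefficients $a_{\alpha j}=\int_\Omega z_\alpha u_1u_j$ and $b_{\alpha j}$, the relation $2b_{\alpha j}=(\lambda_1-\lambda_j)a_{\alpha j}$, the identity $\sum_{j}(\lambda_j-\lambda_1)a_{\alpha j}^2=1$, the Gram--Schmidt rotation making $(a_{\alpha j})$ triangular, and the deduction $\sum_{j=2}^{n+1}(\lambda_j-\lambda_1)\le\sum_\alpha\|z_\alpha u_1\|^{-2}\le 4\lambda_1$, which is (1.5). But the entire content of Theorem 1.1 is the passage from $4\lambda_1$ to $3\lambda_1+\lambda_1^{2}/\lambda_i$ under the assumption that alternative (1) fails, and exactly there your proof stops: the refined inequality $\sum_{j=2}^{n+1}(\lambda_j-\lambda_1)\le 4\lambda_1-\Theta$ is only ``expected'', the correction $\Theta$ is never constructed, and you yourself identify its construction as the main obstacle. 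That is a genuine gap at the heart of the proof, not a routine remainder.

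Moreover, the route you sketch --- using only the trial functions $z_\alpha u_1$ and splitting the $j$-sum at $j=n+2$ --- does not by itself produce a correction of size $\lambda_1-\lambda_1^{2}/\lambda_i$; that family alone is what yields the constant $n+4$. In the paper the improvement requires a second family of test functions, the powers $u_1^{t}$: one expands $u_1^{t}=\sum_j d_ju_j$, proves $\frac{(t-1)^2}{2t-1}\lambda_1\int_\Omega u_1^{2t}=\sum_{j\ge 2}(\lambda_j-\lambda_1)d_j^{2}$, bounds $\sum_\alpha\|z_\alpha u_1\|^{-2}\le\frac{(t+1)^2}{2t-1}\lambda_1B(t)$ with $B(t)=\int_\Omega u_1^{2t}/(\int_\Omega u_1^{t+1})^{2}$, couples the two coefficient families through the Cauchy--Schwarz inequality $(\lambda_j-\lambda_1)\beta_j^{2}+(\lambda_j-\lambda_1)\sum_{\beta=1}^{j-1}a_{\beta j}^{2}\le 1$, and optimizes $t=2\sigma_l/(\sigma_l+\lambda_1)$; this is Theorem 2.1, whose right-hand side $3\lambda_1+\lambda_1^{2}/\sigma_l$ already carries the desired structure. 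Even granting that, the deduction of Theorem 1.1 (with $k=n+1$, $l=i-1$) still needs the three algebraic lemmas --- the recursion $F_j-G_j=F_{j+1}$ of Lemma 2.1 and the re-summations of Lemmas 2.2 and 2.3 --- to show that the defect $C$ incurred by replacing $\sigma_{i-1}$ by $\lambda_i$ is dominated by the surplus $B$ on the left-hand side; it is precisely there, via $\lambda_i/\lambda_1\ge(\lambda_i-\lambda_1)/(\lambda_2-\lambda_1)$, that the failure of alternative (1) is used, and there that the restriction $i\le n+2$ enters (through $l=i-1\le k=n+1$), not through the pointwise bound $\lambda_j-\lambda_1\ge\lambda_i-\lambda_1$ for $j\ge n+2$ inserted into an undetermined $\Theta$. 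None of this machinery, nor any substitute for it, appears in your sketch, so the dichotomy argument as proposed cannot be completed.
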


\begin{remark} Taking $i=1$ in the theorem 1.1, we derive the result
(1.5) of Ashbaugh and Benguria. Taking $i=2$ in the theorem 1.1, we
get the result (1.6) of Ashbaugh and Benguria. Taking $n=2, \ i=4$,
we have  the result (1.4) of Chen and Zheng \cite{CZ}.
\end{remark}


\section{Main results and proofs}

Let $u_j$ be the
orthonormal eigenfunction corresponding to the $j$-th eigenvalue
$\lambda_j$, i.e. $u_j$ satisfies
\begin{equation}
\left \{ \aligned &\Delta u_j=-\lambda_j u_j \quad \text{in \ \ $\Omega$}, \\
&u_j=0 \quad\quad\quad \ \ \ \text{on $\partial \Omega$},\\
&\int_\Omega u_ju_k=\delta_{jk}. \endaligned \right.
\end{equation}
 In this section, first of all, by making use of the fact that $\{u_j\}_{j=1}^{\infty}$ forms an
orthonormal  basis of $L^2(\Omega)$ in place of the Rayleigh-Ritz
formula, we obtain a general inequality for eigenvalues of the
Laplacian.

\begin{theorem} Let $\Omega$ be a bounded domain with a piecewise smooth boundary $\partial\Omega$ in an
$n$-dimensional Euclidean space $\mathbf{R}^{n}$. Then, there exists
a Cartesian coordinate system $(z_1,\cdots,z_n)$ of
$\mathbf{R}^{n}$, such that,
 eigenvalues of the eigenvalue problem {\rm (1.1)} satisfy
\begin{equation}\sum_{\alpha=1}^{n}\frac{\lambda_{k+1}-\lambda_1}
{1+\sum\limits_{j=\alpha+1}^{k}(\lambda_{k+1}-\lambda_j)a_{\alpha
j}^{2}}\leq 3\lambda_1+\frac{\lambda_1^{2}}{\sigma_l},\end{equation}
where
$$
a_{\alpha j}=\int_{\Omega}z_{\alpha}u_1u_{j},\quad
\sigma_l=\lambda_1+\dfrac{\lambda_{l+1}-\lambda_1}
{1+\sum\limits_{j=2}^{l}\dfrac{\lambda_{l+1}-\lambda_j}{\lambda_{j}-\lambda_1}
\biggl[1-(\lambda_{j}-\lambda_1)\sum\limits_{\alpha=1}^{j-1}a_{\alpha
j}^2\biggl]}.
$$
\end{theorem}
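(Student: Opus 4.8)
The plan is to test (1.1) against the family of functions $z_1u_1,\dots,z_nu_1$ and, in the spirit of the paper, to use that $\{u_j\}_{j\ge1}$ is a complete orthonormal system of $L^2(\Omega)$ — both for $z_\alpha u_1$ (which lies in $H^1_0(\Omega)$) and, crucially, for $\partial_\alpha u_1$ (which does not, so the Rayleigh--Ritz inequality is not available for it). First I fix the coordinate system: starting from an arbitrary Cartesian system, translate the origin to $\bigl(\int_\Omega x_1u_1^2,\dots,\int_\Omega x_nu_1^2\bigr)$, so that $a_{\alpha 1}=\int_\Omega z_\alpha u_1^2=0$ for every $\alpha$; then apply a $QR$-type orthogonal change of the coordinates $z_\alpha$ which makes the $n\times n$ matrix $(a_{\alpha j})_{1\le\alpha\le n,\ 2\le j\le n+1}$ upper triangular, hence $a_{\alpha j}=0$ for $2\le j\le\alpha$, while (being a rotation) preserving $a_{\alpha 1}=0$. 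Thus $z_\alpha u_1$ is $L^2$-orthogonal to $u_1,\dots,u_\alpha$. Next I record the elementary identities. Integration by parts gives $\int_\Omega z_\alpha u_1\,\partial_\alpha u_1=-\tfrac12$, and using $\Delta(z_\alpha u_1)=-\lambda_1 z_\alpha u_1+2\partial_\alpha u_1$ one gets $\int_\Omega u_j\,\partial_\alpha u_1=-\tfrac12(\lambda_j-\lambda_1)a_{\alpha j}$. Writing $D_\alpha=\int_\Omega z_\alpha^2u_1^2$ and expanding $z_\alpha u_1=\sum_{j>\alpha}a_{\alpha j}u_j$ in the basis, one obtains $D_\alpha=\sum_{j>\alpha}a_{\alpha j}^2$, $\sum_{j>\alpha}(\lambda_j-\lambda_1)a_{\alpha j}^2=1$, $\int_\Omega|\nabla(z_\alpha u_1)|^2=\lambda_1D_\alpha+1$, and finally $\sum_{\alpha=1}^n\int_\Omega(\partial_\alpha u_1)^2=\int_\Omega|\nabla u_1|^2=\lambda_1$.

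Splitting the identity $\sum_{j>\alpha}(\lambda_j-\lambda_1)a_{\alpha j}^2=1$ at $j=k$ and using $\lambda_j\ge\lambda_{k+1}$ for $j>k$ gives, for every $\alpha$,
\[
(\lambda_{k+1}-\lambda_1)D_\alpha\ \le\ 1+\sum_{j=\alpha+1}^{k}(\lambda_{k+1}-\lambda_j)a_{\alpha j}^2,
\]
hence $\dfrac{\lambda_{k+1}-\lambda_1}{1+\sum_{j=\alpha+1}^{k}(\lambda_{k+1}-\lambda_j)a_{\alpha j}^2}\le\dfrac1{D_\alpha}$. Summing over $\alpha=1,\dots,n$ reduces (2.2) to the single scalar inequality
\[
\sum_{\alpha=1}^{n}\frac1{D_\alpha}\ \le\ 3\lambda_1+\frac{\lambda_1^2}{\sigma_l},
\]
which I call $(\ast)$; note that up to here only the orthonormal-basis expansion and monotonicity of the $\lambda_j$ have been used, and not the variational principle at all.

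For $(\ast)$: plain Cauchy--Schwarz on $\int_\Omega z_\alpha u_1\,\partial_\alpha u_1=-\tfrac12$ already gives $1/D_\alpha\le4\int_\Omega(\partial_\alpha u_1)^2$ and hence $\sum_\alpha 1/D_\alpha\le4\lambda_1$; the point is to recover the extra $\lambda_1-\lambda_1^2/\sigma_l$ from the Cauchy--Schwarz defect. Set $w_\alpha=\partial_\alpha u_1+\frac1{2D_\alpha}z_\alpha u_1$, the part of $\partial_\alpha u_1$ orthogonal to $z_\alpha u_1$; then $1/D_\alpha=4\int_\Omega(\partial_\alpha u_1)^2-4\int_\Omega w_\alpha^2$ and, from the identities above, $\langle w_\alpha,u_j\rangle=\tfrac12\bigl(D_\alpha^{-1}-(\lambda_j-\lambda_1)\bigr)a_{\alpha j}$, so $(\ast)$ is equivalent to $\sum_\alpha\int_\Omega w_\alpha^2\ge\tfrac14\lambda_1(1-\lambda_1/\sigma_l)$. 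To bound $\int_\Omega w_\alpha^2$ from below I peel off the first $l$ modes: with $S_\alpha=\sum_{j=2}^{l}(\lambda_j-\lambda_1)a_{\alpha j}^2$ and $\widetilde\phi_\alpha=z_\alpha u_1-\sum_{j=1}^{l}a_{\alpha j}u_j\in H^1_0(\Omega)$ (orthogonal to $u_1,\dots,u_l$) one computes $\int_\Omega|\nabla\widetilde\phi_\alpha|^2=\lambda_1\int_\Omega\widetilde\phi_\alpha^2+1-S_\alpha$, so the Rayleigh--Ritz inequality $\int_\Omega|\nabla\widetilde\phi_\alpha|^2\ge\lambda_{l+1}\int_\Omega\widetilde\phi_\alpha^2$ yields $\int_\Omega\widetilde\phi_\alpha^2\le(1-S_\alpha)/(\lambda_{l+1}-\lambda_1)$. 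Feeding this tail estimate, together with the explicit contributions of the modes $u_2,\dots,u_l$ to $\langle w_\alpha,u_j\rangle$, into $\int_\Omega w_\alpha^2$, then summing over $\alpha$ and inserting the two global identities $\sum_\alpha\int_\Omega(\partial_\alpha u_1)^2=\lambda_1$ and $\sum_{j\ge2}(\lambda_j-\lambda_1)\sum_\alpha a_{\alpha j}^2=n$, the lower bound should collapse to exactly $\tfrac14\lambda_1(1-\lambda_1/\sigma_l)$ — the displayed formula for $\sigma_l$ being precisely the closed form this computation forces.

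The step before $(\ast)$ is essentially bookkeeping; the substance of the proof is $(\ast)$. The hard part will be to choose, among the various Cauchy--Schwarz defects and variational-principle slacks, exactly the combination that, after summation over $\alpha$ — which itself requires handling the nonlinear dependence on the weights $a_{\alpha j}^2$ with the totals $\sum_\alpha a_{\alpha j}^2$ held fixed, presumably via a convexity/rearrangement argument — reassembles into the single scalar $\sigma_l$ rather than into a weaker or merely implicit constant; one also has to check along the way that the denominator defining $\sigma_l$ is positive (so that $\sigma_l>\lambda_1$ and $\lambda_1^2/\sigma_l$ is a genuine correction to $3\lambda_1$). A simple one-dimensional test (the interval, where the bound turns out to be very nearly sharp) makes me confident $(\ast)$ holds in the stated form, but pinning down this repackaging is where the real work lies.
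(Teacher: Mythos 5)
Your reduction to the scalar inequality $(\ast)$ is correct and coincides with the paper's own first steps (2.3)--(2.8): the translation plus $QR$ rotation giving $a_{\alpha j}=0$ for $j\le\alpha$, the identities $\sum_{j>\alpha}(\lambda_j-\lambda_1)a_{\alpha j}^2=1$ and $\int_\Omega z_\alpha u_1\,\partial_\alpha u_1=-\tfrac12$, and the bound of the left-hand side of (2.2) by $\sum_\alpha\|z_\alpha u_1\|^{-2}$ are exactly as in the paper. The gap is in $(\ast)$ itself, which you do not prove but only predict will ``collapse'' to the stated form; and the route you sketch cannot produce it. Every quantity you propose to feed in is, by Parseval, a function of the array $(a_{\alpha j})$ and of the spectrum alone: $\int_\Omega w_\alpha^2=\tfrac14\sum_j\bigl(D_\alpha^{-1}-(\lambda_j-\lambda_1)\bigr)^2a_{\alpha j}^2$, so that $4\sum_\alpha\int_\Omega w_\alpha^2=4\lambda_1-\sum_\alpha D_\alpha^{-1}$ holds identically and your reformulation of $(\ast)$ is a tautology rather than progress; the Rayleigh--Ritz tail bound for $\widetilde\phi_\alpha$ is equivalent to $\sum_{j>l}a_{\alpha j}^2\le(1-S_\alpha)/(\lambda_{l+1}-\lambda_1)$, which already follows from $\sum_j(\lambda_j-\lambda_1)a_{\alpha j}^2=1$. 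Hence the totality of your inputs reduces to: (A) $\sum_j(\lambda_j-\lambda_1)a_{\alpha j}^2=1$ for each $\alpha$, (B) $\sum_\alpha\sum_j(\lambda_j-\lambda_1)^2a_{\alpha j}^2=4\lambda_1$, and nonnegativity. These do not imply $(\ast)$: if each $z_\alpha u_1$ had all of its $L^2$ mass on a single high mode $j_\alpha>l$ (distinct for different $\alpha$) with the numbers $\lambda_{j_\alpha}-\lambda_1$ summing to $4\lambda_1$, then (A), (B) and all your tail estimates are satisfied, yet $\sum_\alpha D_\alpha^{-1}=4\lambda_1>3\lambda_1+\lambda_1^2/\sigma_l$. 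So no convexity or rearrangement repackaging of these inputs, however clever, can yield $(\ast)$; what you can get from them is only the trivial bound $4\lambda_1$, i.e.\ (1.5).

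The missing mechanism is exactly what the paper supplies with the auxiliary family $u_1^t$, $t>\tfrac12$: expanding $u_1^t=\sum_jd_ju_j$ and normalizing to $\beta_j$, using the weighted Cauchy--Schwarz step (2.18)--(2.19) with weight $u_1^{2t-2}$ together with the pointwise identity $\sum_\alpha(\nabla z_\alpha\cdot\nabla u_1)^2=|\nabla u_1|^2$ to obtain $\sum_\alpha\|z_\alpha u_1\|^{-2}\le\frac{(t+1)^2}{2t-1}\lambda_1B(t)$ with $B(t)=\int_\Omega u_1^{2t}/\bigl(\int_\Omega u_1^{t+1}\bigr)^2$, controlling $B(t)$ via (2.15) and, crucially, the coupling (2.22), $(\lambda_j-\lambda_1)\beta_j^2+(\lambda_j-\lambda_1)\sum_\alpha a_{\alpha j}^2\le1$, which is derived from the rotation trick and the orthogonality relation (2.20), and finally optimizing $t=2\sigma_l/(\sigma_l+\lambda_1)$ --- this optimization is where the constant $3\lambda_1+\lambda_1^2/\sigma_l$ actually comes from. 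Nothing in your proposal plays the role of $u_1^t$, $B(t)$, or (2.22), so the ``real work'' you defer is not bookkeeping to be pinned down but an essential missing idea.
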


\vskip 2mm
\noindent {\it Proof of Theorem 2.1.}   Let
$x_1,\cdots,x_n$ be standard coordinate functions in
 $\mathbf{R}^{n}$.  We consider  the $n\times n$-matrix $A=(A_{\alpha\beta})$ defined by
 $$A_{\alpha\beta}=\int_{\Omega} x_{\alpha} u_1u_{\beta+1}.
$$ From the orthogonalization of Gram and Schmidt, there exist an
upper triangle matrix $R=(R_{\alpha \beta})$ and an orthogonal
matrix $Q=(q_{\alpha \beta})$ such that $R=QA$. Thus,
$$
R_{\alpha \beta}=\sum_{\gamma=1}^nq_{\alpha\gamma}A_{\gamma\beta}
=\int_{\Omega} \sum_{\gamma=1}^nq_{\alpha\gamma}x_{\gamma}
u_1u_{\beta+1}=0, \ \text{for} \ 1\leq \beta <\alpha\leq n.
$$
Defining
$y_{\alpha}=\sum\limits_{\gamma=1}^nq_{\alpha\gamma}x_{\gamma}$, we
have
$$
\int_{\Omega}  y_{\alpha} u_1u_{\beta+1}=\int_{\Omega}
\sum_{\gamma=1}^nq_{\alpha\gamma}x_{\gamma} u_1u_{\beta+1}=0, \
\text{for} \ 1\leq \beta <\alpha\leq n.
$$
Putting
$$
z_{\alpha}= y_{\alpha}-y_{\alpha}^{(0)}, \qquad
y_{\alpha}^{(0)}=\int_{\Omega} y_{\alpha}u_1^2, \quad  \text{for} \
\ 1\leq \alpha \leq n
$$
and
$$
a_{\alpha j}=\int_{\Omega}z_{\alpha}u_1u_{j},
$$
we have
\begin{equation}
a_{\alpha j}=0,\qquad \mbox{for}\ 1\leq j\leq \alpha \leq n.
\end{equation}
Defining
$$
b_{\alpha j}=\int_{\Omega}u_j \nabla z_\alpha \cdot \nabla u_1, \ \
$$
from integration by parts, we obtain
\begin{equation*}
\begin{aligned}
-\lambda_ja_{\alpha j}&=\int_{\Omega}z_\alpha u_1\Delta u_j
=\int_{\Omega}\Delta(z_\alpha u_1)u_j\\
&=\int_{\Omega}\biggl( 2\nabla z_\alpha \cdot\nabla u_1-
\lambda_1 z_\alpha u_1 \biggl)u_j=2b_{\alpha j}-\lambda_1a_{\alpha j},
\end{aligned}
\end{equation*}
namely,
\begin{equation}
2b_{\alpha j}=(\lambda_1-\lambda_j)a_{\alpha j}.
\end{equation}

Since $\{u_j\}_{j=1}^{\infty}$ is an orthonormal basis in $L^2(\Omega)$,  we have
\begin{equation}
z_\alpha u_1=\sum_{j=\alpha+1}^{\infty}a_{\alpha j}u_j \ \ {\rm and}
\ \
  \|z_\alpha u_1\|^2=\sum_{j=\alpha+1}^{\infty}a_{\alpha j}^2.
\end{equation}
Furthermore,
\begin{equation}
2\int_{\Omega}z_\alpha u_1 \nabla z_\alpha\cdot \nabla u_1
=2\sum_{j=\alpha+1}^{\infty}a_{\alpha j}b_{\alpha j}
=\sum_{j=\alpha+1}^{\infty}(\lambda_1-\lambda_j)a_{\alpha j}^2.
\end{equation}
On the other hand, from integration by parts, we get
\begin{equation*}
 -2\int_{\Omega}z_\alpha u_1 \nabla  z_\alpha \cdot \nabla
u_1=-\frac{1}{2}\int_{\Omega} \nabla z_\alpha^{2} \cdot \nabla
u_1^{2}=\frac{1}{2}\int_{\Omega}u_1^{2} \Delta z_\alpha^{2}=1.
\end{equation*}
Hence we have
\begin{equation}
\begin{aligned}
\sum_{j=\alpha+1}^{\infty}(\lambda_j-\lambda_1)a_{\alpha j}^2=1.
\end{aligned}
\end{equation}
For any positive integer $k$,  we obtain
\begin{equation*}
\begin{aligned}
\sum_{j=\alpha+1}^{\infty}(\lambda_j-\lambda_1)a_{\alpha j}^2
=&\sum_{j=\alpha+1}^{k}(\lambda_j-\lambda_1)a_{\alpha j}^2+\sum_{j=k+1}^{\infty}(\lambda_j-\lambda_1)a_{\alpha j}^2\\
\geq &\sum_{j=\alpha+1}^{k}(\lambda_j-\lambda_1)a_{\alpha j}^2
+(\lambda_{k+1}-\lambda_1)\sum_{j=k+1}^{\infty}a_{\alpha j}^2\\
=&\sum_{j=\alpha+1}^{k}(\lambda_j-\lambda_1)a_{\alpha j}^2
+(\lambda_{k+1}-\lambda_1)\sum_{j=\alpha+1}^{\infty}a_{\alpha j}^2-(\lambda_{k+1}-\lambda_1)\sum_{j=\alpha+1}^{k}a_{\alpha j}^2\\
=&\sum_{j=\alpha+1}^{k}(\lambda_j-\lambda_{k+1})a_{\alpha j}^2
+(\lambda_{k+1}-\lambda_1)\sum_{j=\alpha+1}^{\infty}a_{\alpha j}^2.
\end{aligned}
\end{equation*}
Thus, we infer
\begin{equation}
\begin{aligned}
(\lambda_{k+1}-\lambda_1) \|z_\alpha u_1\|^2\leq 1+
\sum_{j=\alpha+1}^{k}(\lambda_{k+1}-\lambda_j)a_{\alpha j}^2.
\end{aligned}
\end{equation}

 For some real constant
 $t>\frac{1}{2}$, from
integration by parts, we get
 \begin{equation}\aligned
\int_\Omega |\nabla u_{1}^{t-1}|^{2}u_1^{2}&=(t-1)^{2}\int_\Omega
u_1^{2t-2}|\nabla u_{1}|^{2}\\
&=\frac{(t-1)^{2}}{2t-1}\int_\Omega
\nabla u_{1}\cdot \nabla u_1^{2t-1}\\
&=-\frac{(t-1)^{2}}{2t-1}\int_\Omega
 u_1^{2t-1}\Delta u_{1}\\
&=\frac{(t-1)^{2}}{2t-1}\lambda_1\int_\Omega
 u_1^{2t}.
\endaligned
\end{equation}
 Letting
$$
d_j=\int_{\Omega}u_1^{t}u_j,
$$
we know
\begin{equation}
\begin{aligned}
u^t_1=\sum_{j=1}^{\infty}d_{j}u_j, \qquad
\|u^t_1\|^2=\int_{\Omega}u_1^{2t}=\sum_{j=1}^{\infty}d_{j}^2.
\end{aligned}
\end{equation}
Since for any function $f\in C^2(\Omega)\cap C(\bar\Omega)$, from
integration by parts, we have
\begin{equation}
-2\int_{\Omega}fu_1 \nabla  f \cdot \nabla u_1
=\int_{\Omega}u_1^2f\Delta f+\int_{\Omega}|\nabla  f|^2u_1^2.
\end{equation}
Taking $f=u_1^{t-1}$ in (2.11), we get
\begin{equation*} \aligned \int_\Omega |\nabla
u_{1}^{t-1}|^{2}u_1^{2}&=-2\int_{\Omega}u_1^{t} \nabla  u_1^{t-1}
\cdot \nabla u_1 -\int_{\Omega}u_1^{t+1}\Delta
u_1^{t-1}\\
&=-\sum_{j=1}^{\infty}d_j\biggl(2\int_{\Omega}u_j \nabla  u_1^{t-1}
\cdot \nabla u_1+\int_{\Omega}u_ju_1\Delta
u_1^{t-1}\biggl)\\
&=-\sum_{j=1}^{\infty}d_j\biggl(\int_{\Omega}u_j \Delta
u_1^{t}-\int_{\Omega}u_ju_1^{t-1}\Delta
u_1\biggl)\\
&=-\sum_{j=1}^{\infty}d_j\biggl(\int_{\Omega} u_1^{t} \Delta
u_j-\int_{\Omega}u_ju_1^{t-1}\Delta
u_1\biggl)\\
&=\sum_{j=1}^{\infty}d_j\biggl(\lambda_j\int_{\Omega} u_1^{t}
u_j-\lambda_1\int_{\Omega}u_ju_1^{t}\biggl)\\
&=\sum_{j=2}^{\infty}(\lambda_j-\lambda_1)d_j^{2}.\\
\endaligned
\end{equation*}

Thus, by (2.9), we get
\begin{equation}
\int_\Omega |\nabla
u_{1}^{t-1}|^{2}u_1^{2}=\frac{(t-1)^{2}}{2t-1}\lambda_1\int_\Omega
 u_1^{2t}=\sum_{j=2}^{\infty}(\lambda_j-\lambda_1)d_j^{2}.
\end{equation}
Taking
\begin{equation}\beta_j=\frac{d_j}{\sqrt{\frac{(t-1)^{2}}{2t-1}\lambda_1\int_\Omega
 u_1^{2t}}},\end{equation}
 then,  we have
  \begin{equation}
\sum_{j=2}^{\infty}(\lambda_j-\lambda_1) \beta_{j}^{2}=1.
\end{equation}
For any positive integer $l$,  we obtain
\begin{equation*}
\begin{aligned}
\sum_{j=2}^{\infty}(\lambda_j-\lambda_1)\beta_{j}^2
=&\sum_{j=2}^{l}(\lambda_j-\lambda_1)\beta_{j}^2+\sum_{j=l+1}^{\infty}(\lambda_j-\lambda_1)\beta_{j}^2\\
\geq &\sum_{j=2}^{l}(\lambda_j-\lambda_1)\beta_{j}^2
+(\lambda_{l+1}-\lambda_1)\sum_{j=l+1}^{\infty}\beta_{j}^2\\
=&\sum_{j=2}^{l}(\lambda_j-\lambda_1)\beta_{j}^2
+(\lambda_{l+1}-\lambda_1)\sum_{j=2}^{\infty}\beta_{j}^2-(\lambda_{l+1}-\lambda_1)\sum_{j=2}^{l}\beta_{j}^2\\
=&\sum_{j=2}^{l}(\lambda_j-\lambda_{l+1})\beta_{j}^2
+(\lambda_{l+1}-\lambda_1)\sum_{j=2}^{\infty}\beta_{j}^2.
\end{aligned}
\end{equation*}
Thus, we infer
\begin{equation}
\begin{aligned}
(\lambda_{l+1}-\lambda_1) \sum_{j=2}^{\infty}\beta_{j}^2\leq 1+
\sum_{j=2}^{l}(\lambda_{l+1}-\lambda_j)\beta_{j}^2.
\end{aligned}
\end{equation}

Set
 $$B(t)=\frac{\int_\Omega
 u_1^{2t}}{\left(\int_\Omega
 u_1^{t+1}\right)^{2}}.$$
From (2.10), (2.12), (2.13) and (2.14), we infer
\begin{equation}
\begin{aligned}
\dfrac{(t-1)^2}{2t-1}\lambda_1\sum_{j=1}^{\infty}\beta_{ j}^2=1.
\end{aligned}
\end{equation}
Since
\begin{equation*}
\begin{aligned}&\biggl(\int_{\Omega}u_1^{t +1}\biggl)^2
=d_{1}^2=\dfrac{(t-1)^2}{2t-1}\lambda_1\beta_1^2\int_{\Omega}u_1^{2t},
\end{aligned}
\end{equation*}
according to the definition of $B(t)$, we have
\begin{equation}
\begin{aligned}&
B(t)=\dfrac{\int_{\Omega}u_1^{2t}}{\biggl(\int_{\Omega}u_1^{t
+1}\biggl)^2} =\dfrac1{\dfrac{(t-1)^2}{2t-1}\lambda_1\beta_1^2}=
\dfrac1{1-\dfrac{(t-1)^2}{2t-1}\lambda_1\sum\limits_{j=2}^{\infty}\beta_{
j}^2}.
\end{aligned}
\end{equation}
By integration by parts, we have
$$
d_1=\int_{\Omega}u_1^{t+1}=\frac{1}{2}\int_{\Omega} u_1^{t+1}\Delta z_\alpha^{2}
=-(t+1)\int_{\Omega}z_\alpha u_1^{t}\nabla z_\alpha\cdot\nabla
u_1.
$$
From the Cauchy-Schwarz inequality, we get
\begin{equation}
d_1^{2}\leq (t+1)^{2}\int_{\Omega}(z_\alpha
u_1)^{2}\int_{\Omega}u_1^{2t-2}(\nabla z_\alpha\cdot\nabla
u_1)^{2}.\end{equation} Hence, we have
\begin{equation}
\sum_{\alpha=1}^{n}\frac{1}{\|z_\alpha
u_1\|^{2}}\leq\frac{(t+1)^2\int_{\Omega}u_1^{2t-2}|\nabla
u_1|^{2}}{\left(\int_{\Omega}u_1^{t+1}\right)^{2}}=\frac{(t+1)^2}{2t-1}\lambda_1B(t).
\end{equation}

 For any fixed $j\geq 2$, we choose an orthonormal
 transformation
 $\widetilde{z}_\alpha=\sum\limits_{\beta=1}^{n}h_{\alpha\beta}z_\beta \ (1\leq\alpha\leq n)$
 with ${\rm det}(h_{\alpha\beta})=1$ such that
 $$\widetilde{a}_{\alpha j}=\int_\Omega \widetilde{z}_\alpha u_1u_j=\left
 \{ \aligned
 \sqrt{\sum_{\beta=1}^{j-1}a^2_{\beta j}}~, \quad
 & \alpha=1,\\
  0,\quad\quad\quad & \alpha\geq 1.\endaligned
\right. $$
 From the definition of $\widetilde{z}_{\alpha}$ and (2.4), we
 get
\begin{equation*}
\begin{aligned}
0&=-\frac 2{t+1}\int_{\Omega}u_1^{t +1}\Delta \widetilde{z}_{\alpha}
= 2\int_{\Omega}u_1^{t}\nabla \widetilde{z}_{\alpha}\cdot\nabla u_1\\
&= 2\sum_{p=1}^{\infty}d_{p}\int_{\Omega}u_p\nabla \widetilde{z}_{\alpha}\cdot\nabla u_1\\
&= 2\sum_{p=1}^{\infty}\sum_{\beta=1}^{n}d_{p}h_{\alpha\beta}b_{\beta p}\\
&=\sum_{p=1}^{\infty}\sum_{\beta=1}^{n}d_{p}h_{\alpha\beta}(\lambda_1-\lambda_p)a_{\beta
p}\\
&=\sum_{p=1}^{\infty}(\lambda_1-\lambda_p)d_{p}\widetilde{a}_{\alpha
p}.
\end{aligned}
\end{equation*}
Hence, from (2.13), we obtain
\begin{equation}
\sum_{p=2}^{\infty}(\lambda_p-\lambda_1)\beta_{p}\widetilde{a}_{\alpha
p}=0.
\end{equation}
Notice that \begin{equation}\aligned
1&=-2\int_{\Omega}\widetilde{z}_\alpha u_1
\nabla \widetilde{z}_\alpha \cdot \nabla u_1\\
&=-2\sum_{p=1}^{\infty}\widetilde{a}_{\alpha
p}h_{\alpha\beta}b_{\beta p}\\
&=-\sum_{p=1}^{\infty}\widetilde{a}_{\alpha
p}h_{\alpha\beta}(\lambda_1-\lambda_p)a_{\beta
p}\\
&=\sum_{p=2}^{\infty}(\lambda_p-\lambda_1)\widetilde{a}_{\alpha
p}^{2}.\endaligned\end{equation}
 Thus, from (2.20) and the Cauchy-Schwarz
inequality, we infer
\begin{equation*}
\biggl((\lambda_{j}-\lambda_1)\beta_{j}\widetilde{a}_{\alpha
j}\biggl)^2 \leq \left(\sum_{p=2, p\neq
j}^{\infty}(\lambda_p-\lambda_1)\beta_{p}^2\right)\left(\sum_{p=2,
p\neq j}^{\infty}(\lambda_p-\lambda_1)\widetilde{a}_{\alpha
p}^2\right).
\end{equation*}
Then, according to (2.21) and (2.14), we derive
\begin{equation*}
(\lambda_{j}-\lambda_1)^2\beta_{j}^2\widetilde{a}_{\alpha j}^2 \leq
\biggl(1-(\lambda_{j}-\lambda_1)\beta_{j}^2\biggl) \biggl(1-
(\lambda_{j}-\lambda_1)\widetilde{a}_{\alpha j}^2\biggl).
\end{equation*}
Hence, we have
$$
(\lambda_{j}-\lambda_1)\beta_{j}^2+
(\lambda_{j}-\lambda_1)\widetilde{a}_{\alpha j}^2\leq 1,$$ namely,
\begin{equation}
(\lambda_{j}-\lambda_1)\beta_{j}^2+
(\lambda_{j}-\lambda_1)\sum_{\beta=1}^{j-1}a_{\beta j}^2\leq 1.
\end{equation}

 From (2.8), (2.15), (2.17), (2.19) and (2.22), we have
$$\aligned
&\sum_{\alpha=1}^{n}\frac{\lambda_{k+1}-\lambda_{1}}{1+\sum\limits_{j=\alpha+1}^{k}
(\lambda_{k+1}-\lambda_{j})a_{\alpha j}^{2}}\\
\leq &\sum_{\alpha=1}^{n}\frac{1}{\|z_{\alpha}u_1\|^{2}}\\
 \leq &~
\frac{(t+1)^{2}}{2t-1}\lambda_{1}B(t)\\
=&~\dfrac{(t+1)^{2}}{2t-1}\dfrac{\lambda_{1}}{1-\dfrac{(t-1)^2}{2t-1}\lambda_1\sum\limits_{j=2}^{\infty}\beta_{
j}^2}\\
\leq
&~\dfrac{(t+1)^{2}}{2t-1}\dfrac{\lambda_{1}}{1-\dfrac{(t-1)^2}{2t-1}\dfrac{\lambda_1}{\lambda_{l+1}-\lambda_{1}}
\biggl(1+\sum\limits_{j=2}^{l}(\lambda_{l+1}-\lambda_{j})\beta_{
j}^2\biggl)}\\
 \leq
&~\dfrac{(t+1)^{2}}{2t-1}\dfrac{\lambda_{1}}{1-\dfrac{(t-1)^2}{2t-1}\dfrac{\lambda_1}{\lambda_{l+1}-\lambda_{1}}
\biggl(1+\sum\limits_{j=2}^{l}\dfrac{\lambda_{l+1}-\lambda_{j}}{\lambda_{j}-\lambda_{1}}\biggl[1-(\lambda_{j}-\lambda_{1})
\sum\limits_{\alpha=1}^{j-1}a_{\alpha j}^2\biggl]\biggl)}\\
=&~\dfrac{(t+1)^{2}}{2t-1}\dfrac{\lambda_{1}}{1-\dfrac{(t-1)^2}{2t-1}\dfrac{\lambda_1}{\sigma_{l}-\lambda_{1}}}.
\endaligned$$
Taking $t=\dfrac{2\sigma_l}{\sigma_l+\lambda_1},$ we obtain
(2.2).\endproof

In order to prove the theorem 1.1, we prepare the following lemmas.

\begin{lemma}
Let $\{\theta_i\}_{i=1}^{m+2}$ be an increasing real sequence and
let $\omega=(\omega_{jk})$ be a real $(m+1)\times (m+1)$-matrix.
Then the following equality holds:
$$\aligned &\sum_{i=1}^{m}\frac{\theta_{m+2}-\theta_{1}}{1+\sum\limits_{p=i+1}^{m+1}
(\theta_{m+2}-\theta_{p})\omega_{i
p}^{2}}-\sum_{i=1}^{m}(\theta_{i+1}-\theta_1)\\
=&\sum_{j=1}^{m}\sum_{i=1}^{m+1-j}
\frac{(\theta_{i+j+1}-\theta_{i+j})\biggl[1-\sum\limits_{p=i+1}^{i+j}(\theta_p-\theta_1)\omega_{i
p}^{2}\biggl]}{\biggl[1+\sum\limits_{p=i+1}^{i+j-1}(\theta_{i+j}-\theta_p)\omega_{i
p}^{2}\biggl]\biggl[1+\sum\limits_{p=i+1}^{i+j}(\theta_{i+j+1}-\theta_p)\omega_{i
p}^{2}\biggl]}.\endaligned$$
\end{lemma}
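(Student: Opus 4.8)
The plan is to recognize the asserted identity as a telescoping sum performed separately within each ``row'' index $i$. For a fixed $i$ with $1\le i\le m$ and each integer $q$ with $i\le q\le m+1$, set
$$
F_i(q)=\frac{\theta_{q+1}-\theta_1}{1+\sum\limits_{p=i+1}^{q}(\theta_{q+1}-\theta_p)\omega_{ip}^{2}},
$$
with the convention that an empty sum equals $0$. Since $\{\theta_i\}$ is increasing and each $\omega_{ip}^{2}\ge 0$, every denominator is $\ge 1>0$, so $F_i(q)$ is well defined; moreover $F_i(i)=\theta_{i+1}-\theta_1$ and $F_i(m+1)$ is exactly the $i$-th term of the first sum on the left-hand side. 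Hence the left-hand side equals $\sum_{i=1}^{m}\bigl(F_i(m+1)-F_i(i)\bigr)$, and I would telescope each difference as $F_i(m+1)-F_i(i)=\sum_{q=i}^{m}\bigl(F_i(q+1)-F_i(q)\bigr)$.

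The core step is to evaluate a single difference $F_i(q+1)-F_i(q)$. Writing $N_q=1+\sum_{p=i+1}^{q}(\theta_{q+1}-\theta_p)\omega_{ip}^{2}$ for the denominator of $F_i(q)$ and putting the two fractions over the common denominator $N_qN_{q+1}$, the numerator is $(\theta_{q+2}-\theta_1)N_q-(\theta_{q+1}-\theta_1)N_{q+1}$. The constant parts of $N_q,N_{q+1}$ contribute $\theta_{q+2}-\theta_{q+1}$; separating the $p=q+1$ term of $N_{q+1}$ contributes $-(\theta_{q+1}-\theta_1)(\theta_{q+2}-\theta_{q+1})\omega_{i,q+1}^{2}$; and for each $p$ with $i+1\le p\le q$ the coefficient of $\omega_{ip}^{2}$ is
$$
(\theta_{q+2}-\theta_1)(\theta_{q+1}-\theta_p)-(\theta_{q+1}-\theta_1)(\theta_{q+2}-\theta_p)=-(\theta_{q+2}-\theta_{q+1})(\theta_p-\theta_1),
$$
which is the one genuinely algebraic identity needed and is a two-line expansion. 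Collecting terms, the numerator factors as $(\theta_{q+2}-\theta_{q+1})\bigl[1-\sum_{p=i+1}^{q+1}(\theta_p-\theta_1)\omega_{ip}^{2}\bigr]$, so that
$$
F_i(q+1)-F_i(q)=\frac{(\theta_{q+2}-\theta_{q+1})\bigl[1-\sum\limits_{p=i+1}^{q+1}(\theta_p-\theta_1)\omega_{ip}^{2}\bigr]}{\bigl[1+\sum\limits_{p=i+1}^{q}(\theta_{q+1}-\theta_p)\omega_{ip}^{2}\bigr]\bigl[1+\sum\limits_{p=i+1}^{q+1}(\theta_{q+2}-\theta_p)\omega_{ip}^{2}\bigr]}.
$$

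Finally I would re-index. Summing this difference over $i$ from $1$ to $m$ and $q$ from $i$ to $m$, and substituting $j=q-i+1$ (so $q=i+j-1$, $q+1=i+j$, $q+2=i+j+1$, while the range $1\le i\le m$, $i\le q\le m$ becomes $1\le j\le m$, $1\le i\le m+1-j$), turns each $F_i(q+1)-F_i(q)$ into precisely the summand appearing on the right-hand side of the claimed identity. Combined with the telescoping reduction of the left-hand side over the same index set, this finishes the proof. I do not expect any real obstacle here: the only non-bookkeeping ingredient is the elementary factorization displayed above, and the positivity of all denominators guarantees the manipulations are not merely formal.
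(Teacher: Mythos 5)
Your proof is correct, and it organizes the telescoping differently from the paper in a way that is worth noting. The paper works with the aggregated quantities $F_j$, $G_j$ (sums over all rows $i$ at "stage" $j$, where each term of $F_j$ mixes the truncated denominator at $\theta_{i+j}$ with the full denominator at $\theta_{m+2}$) and verifies the recursion $F_j-G_j=F_{j+1}$ by a page-long expansion, then chains $F_1=G_1+\dots+G_m+F_{m+1}$ with $F_{m+1}=0$. You instead fix the row $i$ and telescope over the truncation level $q$ of your single-row quantity $F_i(q)$, so that everything reduces to the one-line factorization $(\theta_{q+2}-\theta_1)(\theta_{q+1}-\theta_p)-(\theta_{q+1}-\theta_1)(\theta_{q+2}-\theta_p)=-(\theta_{q+2}-\theta_{q+1})(\theta_p-\theta_1)$, followed by the re-indexing $j=q-i+1$; your computation of $F_i(q+1)-F_i(q)$, the identification $F_i(i)=\theta_{i+1}-\theta_1$, $F_i(m+1)=$ the $i$-th term of the first sum, and the index bookkeeping all check out. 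The two arguments compute the same telescope — indeed the paper's $F_j$ equals $\sum_i\bigl(F_i(m+1)-F_i(i+j-1)\bigr)$ in your notation, so their recursion is just a regrouping of your row-wise differences — but your version is lighter: the key algebra is a two-line factorization rather than the paper's long verification of $F_j-G_j=F_{j+1}$, and it makes transparent that the "increasing" hypothesis is only used to keep the denominators nonzero (positive), exactly as you observe.
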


\proof \ \ For $1\leq j\leq m+1$, we define
$$\aligned
F_j=&\sum_{i=1}^{m+1-j}
\frac{(\theta_{m+2}-\theta_{i+j})\biggl[1-\sum\limits_{p=i+1}^{i+j}(\theta_p-\theta_1)\omega_{i
p}^{2}\biggl]-(\theta_{i+j}-\theta_{1})\sum\limits_{p=i+j+1}^{m+1}(\theta_{m+2}-\theta_p)\omega_{i
p}^{2}}{\biggl[1+\sum\limits_{p=i+1}^{i+j-1}(\theta_{i+j}-\theta_p)\omega_{i
p}^{2}\biggl]\biggl[1+\sum\limits_{p=i+1}^{m+1}(\theta_{m+2}-\theta_p)\omega_{i
p}^{2}\biggl]},\endaligned$$
$$\aligned
 G_j=&\sum_{i=1}^{m+1-j}
\frac{(\theta_{i+j+1}-\theta_{i+j})\biggl[1-\sum\limits_{p=i+1}^{i+j}(\theta_p-\theta_1)\omega_{i
p}^{2}\biggl]}{\biggl[1+\sum\limits_{p=i+1}^{i+j-1}(\theta_{i+j}-\theta_p)\omega_{i
p}^{2}\biggl]\biggl[1+\sum\limits_{p=i+1}^{i+j}(\theta_{i+j+1}-\theta_p)\omega_{i
p}^{2}\biggl]},\\
D_{ij}=&\biggl[1+\sum\limits_{p=i+1}^{i+j-1}(\theta_{i+j}-\theta_p)\omega_{i
p}^{2}\biggl]\biggl[1+\sum\limits_{p=i+1}^{i+j}(\theta_{i+j+1}-\theta_p)\omega_{i
p}^{2}\biggl]\biggl[1+\sum\limits_{p=i+1}^{m+1}(\theta_{m+2}-\theta_p)\omega_{i
p}^{2}\biggl].
\endaligned$$ Then, we have the following recursion
formula:
\begin{equation}
 F_j-G_j=F_{j+1}.\end{equation}
 In fact,$$\aligned
 &F_j-G_j\\
 =&\sum_{i=1}^{m+1-j}\frac{1}{D_{ij}}\biggl\{(\theta_{m+2}-\theta_{i+j})\biggl[1-\sum\limits_{p=i+1}^{i+j}(\theta_p-\theta_1)\omega_{i
p}^{2}\biggl]\biggl[1+\sum\limits_{p=i+1}^{i+j}(\theta_{i+j+1}-\theta_p)\omega_{i
p}^{2}\biggl]\\
& \ \ \
-(\theta_{i+j}-\theta_{1})\sum\limits_{p=i+j+1}^{m+1}(\theta_{m+2}-\theta_p)\omega_{i
p}^{2}\biggl[1+\sum\limits_{p=i+1}^{i+j}(\theta_{i+j+1}-\theta_p)\omega_{i
p}^{2}\biggl]\\
& \ \ \
-(\theta_{i+j+1}-\theta_{i+j})\biggl[1-\sum\limits_{p=i+1}^{i+j}(\theta_p-\theta_1)\omega_{i
p}^{2}\biggl]\biggl[1+\sum\limits_{p=i+1}^{m+1}(\theta_{m+2}-\theta_p)\omega_{i
p}^{2}\biggl]\biggl\}\\
=&\sum_{i=1}^{m+1-j}\frac{1}{D_{ij}}\biggl\{\biggl[1-\sum\limits_{p=i+1}^{i+j}(\theta_p-\theta_1)\omega_{i
p}^{2}\biggl]\biggl[(\theta_{m+2}-\theta_{i+j})
+(\theta_{m+2}-\theta_{i+j})\sum\limits_{p=i+1}^{i+j}(\theta_{i+j+1}-\theta_p)\omega_{i
p}^{2}\\
& \ \ \
 -(\theta_{i+j+1}-\theta_{i+j})
-(\theta_{i+j+1}-\theta_{i+j})\sum\limits_{p=i+1}^{m+1}(\theta_{m+2}-\theta_p)\omega_{i
p}^{2}\biggl]\\
& \ \ \
-(\theta_{i+j}-\theta_{1})\sum\limits_{p=i+j+1}^{m+1}(\theta_{m+2}-\theta_p)\omega_{i
p}^{2}\biggl[1+\sum\limits_{p=i+1}^{i+j}(\theta_{i+j+1}-\theta_p)\omega_{i
p}^{2}\biggl]\biggl\}\\
=&\sum_{i=1}^{m+1-j}\frac{1}{D_{ij}}\biggl\{\biggl[1-\sum\limits_{p=i+1}^{i+j}(\theta_p-\theta_1)\omega_{i
p}^{2}\biggl]\biggl[(\theta_{m+2}-\theta_{i+j+1})
+\sum_{p=i+1}^{i+j}(\theta_{m+2}-\theta_{i+j})(\theta_{i+j+1}-\theta_p)\omega_{i
p}^{2}\\
& \ \ \
-\sum_{p=i+1}^{i+j}(\theta_{i+j+1}-\theta_{i+j})(\theta_{m+2}-\theta_p)\omega_{i
p}^{2}-(\theta_{i+j+1}-\theta_{i+j})\sum_{p=i+j+1}^{m+1}(\theta_{m+2}-\theta_p)\omega_{i
p}^{2}\biggl]\\
& \ \ \
-(\theta_{i+j}-\theta_{1})\sum\limits_{p=i+j+1}^{m+1}(\theta_{m+2}-\theta_p)\omega_{i
p}^{2}\biggl[1+\sum\limits_{p=i+1}^{i+j}(\theta_{i+j+1}-\theta_p)\omega_{i
p}^{2}\biggl]\biggl\}\\
=&\sum_{i=1}^{m+1-j}\frac{1}{D_{ij}}\biggl\{\biggl[1-\sum\limits_{p=i+1}^{i+j}(\theta_p-\theta_1)\omega_{i
p}^{2}\biggl]\biggl[(\theta_{m+2}-\theta_{i+j+1})
+(\theta_{m+2}-\theta_{i+j+1})\sum_{p=i+1}^{i+j}(\theta_{i+j}-\theta_p)\omega_{i
p}^{2}\\
& \ \ \
-(\theta_{i+j+1}-\theta_{i+j})\sum_{p=i+j+1}^{m+1}(\theta_{m+2}-\theta_p)\omega_{i
p}^{2}\biggl]-(\theta_{i+j}-\theta_{1})\sum\limits_{p=i+j+1}^{m+1}(\theta_{m+2}-\theta_p)\omega_{i
p}^{2}\\
\endaligned$$
$$\aligned
& \ \ \
-(\theta_{i+j}-\theta_{1})\sum_{p=i+j+1}^{m+1}(\theta_{m+2}-\theta_p)\omega_{i
p}^{2}\sum\limits_{p=i+1}^{i+j}(\theta_{i+j+1}-\theta_p)\omega_{i
p}^{2}\biggl\}\\
&=\sum_{i=1}^{m+1-j}\frac{1}{D_{ij}}\biggl\{(\theta_{m+2}-\theta_{i+j+1})\biggl[1-\sum_{p=i+1}^{i+j}(\theta_p-\theta_1)\omega_{i
p}^{2}\biggl]\biggl[
1+\sum_{p=i+1}^{i+j-1}(\theta_{i+j}-\theta_p)\omega_{i
p}^{2}\biggl]\\
& \ \ \
-(\theta_{i+j+1}-\theta_{i+j})\sum_{p=i+j+1}^{m+1}(\theta_{m+2}-\theta_p)\omega_{i
p}^{2}\biggl[1-\sum_{p=i+1}^{i+j}(\theta_p-\theta_1)\omega_{i
p}^{2}\biggl]\\
 & \ \ \
-(\theta_{i+j}-\theta_{1})\sum\limits_{p=i+j+1}^{m+1}(\theta_{m+2}-\theta_p)\omega_{i
p}^{2}\\
& \ \ \
-(\theta_{i+j}-\theta_{1})\sum_{p=i+j+1}^{m+1}(\theta_{m+2}-\theta_p)\omega_{i
p}^{2}\sum\limits_{p=i+1}^{i+j}(\theta_{i+j+1}-\theta_p)\omega_{i
p}^{2}\biggl\}\\
&=\sum_{i=1}^{m+1-j}\frac{1}{D_{ij}}\biggl\{(\theta_{m+2}-\theta_{i+j+1})\biggl[1-\sum_{p=i+1}^{i+j}(\theta_p-\theta_1)\omega_{i
p}^{2}\biggl]\biggl[
1+\sum_{p=i+1}^{i+j-1}(\theta_{i+j}-\theta_p)\omega_{i
p}^{2}\biggl]\\
& \ \ \
-(\theta_{i+j+1}-\theta_{1})\sum_{p=i+j+1}^{m+1}(\theta_{m+2}-\theta_p)\omega_{i
p}^{2}\biggl[1+\sum_{p=i+1}^{i+j}(\theta_{i+j}-\theta_p)\omega_{i
p}^{2}\biggl]\biggl\}\\
&=\sum_{i=1}^{m+1-j}\frac{\biggl[1+\sum\limits_{p=i+1}^{i+j-1}(\theta_{i+j}-\theta_p)\omega_{i
p}^{2}\biggl]}{D_{ij}}\biggl\{(\theta_{m+2}-\theta_{i+j+1})\biggl[1-\sum_{p=i+1}^{i+j}(\theta_p-\theta_1)\omega_{i
p}^{2}\biggl]\\
& \ \ \
-(\theta_{i+j+1}-\theta_{1})\sum_{p=i+j+1}^{m+1}(\theta_{m+2}-\theta_p)\omega_{i
p}^{2}\biggl\}\\
&=\sum_{i=1}^{m-j}\frac{(\theta_{m+2}-\theta_{i+j+1})\biggl[1-\sum\limits_{p=i+1}^{i+j+1}(\theta_p-\theta_1)\omega_{i
p}^{2}\biggl]
-(\theta_{i+j+1}-\theta_{1})\sum\limits_{p=i+j+2}^{m+1}(\theta_{m+2}-\theta_p)\omega_{i
p}^{2}}{\biggl[1+\sum\limits_{p=i+1}^{i+j}(\theta_{i+j+1}-\theta_p)\omega_{i
p}^{2}\biggl]\biggl[1+\sum\limits_{p=i+1}^{m+1}(\theta_{m+2}-\theta_p)\omega_{i
p}^{2}\biggl]}\\
&=F_{j+1}.
\endaligned
$$
 Therefore, we have
$$\aligned
&\sum_{i=1}^{m}\frac{\theta_{m+2}-\theta_{1}}{1+\sum\limits_{p=i+1}^{m+1}
(\theta_{m+2}-\theta_{p})\omega_{i
p}^{2}}-\sum_{i=1}^{m}(\theta_{i+1}-\theta_1)\\
=&\sum_{i=1}^{m}\frac{(\theta_{m+2}-\theta_{i+1})\biggl[1-(\theta_{i+1}-\theta_1)\omega_{i
i+1}^{2}\biggl]-(\theta_{i+1}-\theta_1)\sum\limits_{p=i+2}^{m+1}
(\theta_{m+2}-\theta_{p})\omega_{ip}^{2}}{1+\sum\limits_{p=i+1}^{m+1}
(\theta_{m+2}-\theta_{p})\omega_{ip}^{2}}\\
=&~F_1=G_1+F_2=G_1+G_2+F_3=\cdots=\sum_{j=1}^{m}G_j+F_{m+1}.\endaligned$$
Since $F_{m+1}=0,$ we complete the proof of the lemma 2.1.
\endproof

\begin{lemma} Let $\{\theta_i\}_{i=1}^{m+2}$ be an increasing real sequence and let
$\omega=(\omega_{jk})$ be a real $(m+1)\times (m+1)$-matrix. Then
the following equality holds:
$$\aligned &\sum_{j=1}^{m}\sum_{i=1}^{m+1-j}
\frac{(\theta_{i+j+1}-\theta_{i+j})(\theta_{2}-\theta_{1})^{2}}
{(\theta_{i+j+1}-\theta_{1})(\theta_{i+j}-\lambda_{1})}
\biggl[1-\sum\limits_{p=i+1}^{i+j}(\theta_p-\theta_1)\omega_{i
p}^{2}\biggl]\\
=&\sum_{j=2}^{m+1}
\frac{(\theta_{j+1}-\theta_{j})(\theta_{2}-\theta_{1})^{2}}
{(\theta_{j+1}-\theta_{1})(\theta_{j}-\lambda_{1})}
\sum_{i=1}^{j-1}\biggl[1-\sum\limits_{p=i+1}^{j}(\theta_p-\theta_1)\omega_{i
p}^{2}\biggl].\endaligned$$
\end{lemma}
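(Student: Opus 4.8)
The plan is to establish this identity purely by reorganizing the order of summation in the double sum on the left-hand side, grouping the terms according to the common value $i+j$. (Here I read the $\lambda_1$ appearing in the two denominators as $\theta_1$, which is evidently what is meant for a generic sequence $\theta$.) The structural fact that makes this work is that the summand on the left,
\[
\frac{(\theta_{i+j+1}-\theta_{i+j})(\theta_{2}-\theta_{1})^{2}}
{(\theta_{i+j+1}-\theta_{1})(\theta_{i+j}-\theta_{1})}
\biggl[1-\sum_{p=i+1}^{i+j}(\theta_p-\theta_1)\omega_{i p}^{2}\biggr],
\]
depends on the pair $(i,j)$ only through $i$ and the sum $k=i+j$: the rational prefactor is a function of $i+j$ alone, while in the bracket the index $i$ enters only as the lower limit $i+1$ of the inner sum and as the first subscript of $\omega$, and $i+j$ enters only as the upper limit of that inner sum. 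Thus $j$ itself never appears except in the combination $i+j$.

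Concretely, I would replace the summation variable $j$ by $k=i+j$ and carefully recompute the index ranges. As $(i,j)$ runs over $1\le j\le m$, $1\le i\le m+1-j$, the value $k=i+j$ runs over $2\le k\le m+1$: the lower bound comes from $i,j\ge 1$ and the upper bound from $i\le m+1-j$. Conversely, for a fixed $k$ in this range I would determine the admissible $i$: setting $j=k-i$, the four constraints $j\ge 1$, $j\le m$, $i\ge 1$, $i\le m+1-j$ become respectively $i\le k-1$, $i\ge k-m$, $i\ge 1$, and $k\le m+1$; the last holds automatically, and since $k\le m+1$ forces $k-m\le 1$, the two lower bounds on $i$ collapse to $i\ge 1$. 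Hence $i$ ranges over exactly $1\le i\le k-1$. Substituting and pulling the $k$-dependent prefactor out of the inner $i$-sum then rewrites the left-hand side as
\[
\sum_{k=2}^{m+1}\frac{(\theta_{k+1}-\theta_{k})(\theta_{2}-\theta_{1})^{2}}
{(\theta_{k+1}-\theta_{1})(\theta_{k}-\theta_{1})}
\sum_{i=1}^{k-1}\biggl[1-\sum_{p=i+1}^{k}(\theta_p-\theta_1)\omega_{i p}^{2}\biggr],
\]
which after renaming $k$ as $j$ is verbatim the right-hand side.

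I do not expect any genuine difficulty here: unlike the previous lemma, no telescoping recursion or cancellation is required, and no inequalities or special properties of the entries $\omega_{ip}$ are used. The one point that deserves attention, and is the only thing one could reasonably call the main obstacle, is the bookkeeping of the inner range — verifying that for every $k\le m+1$ the constraint $i\ge k-m$ is vacuous, so that the inner sum on the right legitimately runs from $1$ (rather than from $\max\{1,k-m\}$) up to $k-1$. Once that is checked, the identity follows immediately.
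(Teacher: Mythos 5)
Your proof is correct and is essentially the same as the paper's: both establish the identity by regrouping the double sum according to the value of $i+j$ (the paper writes out the regrouping term by term, while you phrase it as a change of variables $k=i+j$ with the index-range check $1\le i\le k-1$), and your reading of $\lambda_1$ as $\theta_1$ matches the paper's evident intent.
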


\begin{proof}  In fact, we have
$$\aligned &\sum_{j=1}^{m}\sum_{i=1}^{m+1-j}
\frac{(\theta_{i+j+1}-\theta_{i+j})(\theta_{2}-\theta_{1})^{2}}
{(\theta_{i+j+1}-\theta_{1})(\theta_{i+j}-\lambda_{1})}
\biggl[1-\sum\limits_{p=i+1}^{i+j}(\theta_p-\theta_1)\omega_{i
p}^{2}\biggl]\\
=& ~\frac{(\theta_{m+2}-\theta_{m+1})(\theta_{2}-\theta_{1})^{2}}
{(\theta_{m+2}-\theta_{1})(\theta_{m+1}-\lambda_{1})}\biggl[1-\sum_{p=2}^{m+1}(\theta_p-\theta_1)\omega_{1
p}^{2}\biggl]\\
 &+\sum_{i=1}^{2}
\frac{(\theta_{i+m}-\theta_{i+m-1})(\theta_{2}-\theta_{1})^{2}}
{(\theta_{i+m}-\theta_{1})(\theta_{i+m-1}-\lambda_{1})}
\biggl[1-\sum_{p=i+1}^{i+m-1}(\theta_p-\theta_1)\omega_{i
p}^{2}\biggl]\\
&+\cdots+\sum_{i=1}^{m-1}
\frac{(\theta_{i+3}-\theta_{i+2})(\theta_{2}-\theta_{1})^{2}}
{(\theta_{i+3}-\theta_{1})(\theta_{i+2}-\lambda_{1})}\biggl[1-\sum_{p=i+1}^{i+2}(\theta_p-\theta_1)\omega_{i
p}^{2}\biggl]\\
&+\sum_{i=1}^{m}
\frac{(\theta_{i+2}-\theta_{i+1})(\theta_{2}-\theta_{1})^{2}}
{(\theta_{i+2}-\theta_{1})(\theta_{i+1}-\lambda_{1})}
\biggl[1-(\theta_{i+1}-\theta_1)\omega_{i i+1}^{2}\biggl]\\
=&~\frac{(\theta_{m+2}-\theta_{m+1})(\theta_{2}-\theta_{1})^{2}}
{(\theta_{m+2}-\theta_{1})(\theta_{m+1}-\lambda_{1})}\sum_{i=1}^{m}\biggl[1-\sum_{p=i+1}^{m+1}(\theta_p-\theta_1)\omega_{i
p}^{2}\biggl]\\
&+~\frac{(\theta_{m+1}-\theta_{m})(\theta_{2}-\theta_{1})^{2}}
{(\theta_{m+1}-\theta_{1})(\theta_{m}-\lambda_{1})}\sum_{i=1}^{m-1}\biggl[1-\sum_{p=i+1}^{m}(\theta_p-\theta_1)\omega_{i
p}^{2}\biggl]\\
&+\cdots+\frac{(\theta_{3}-\theta_{2})(\theta_{2}-\theta_{1})^{2}}
{(\theta_{3}-\theta_{1})(\theta_{2}-\lambda_{1})}\biggl[1-(\theta_2-\theta_1)\omega_{1
2}^{2}\biggl]\\
 =&\sum_{j=2}^{m+1}
\frac{(\theta_{j+1}-\theta_{j})(\theta_{2}-\theta_{1})^{2}}
{(\theta_{j+1}-\theta_{1})(\theta_{j}-\lambda_{1})}
\sum_{i=1}^{j-1}\biggl[1-\sum\limits_{p=i+1}^{j}(\theta_p-\theta_1)\omega_{i
p}^{2}\biggl].\endaligned
$$
\end{proof}

\begin{lemma} Let $\{\theta_i\}_{i=1}^{m+2}$ be an increasing real sequence and let
$\omega=(\omega_{jk})$ be a real $(m+1)\times (m+1)$-matrix. Then,
for $1\leq i\leq m+2$, the following equality holds:
$$\aligned
&\sum\limits_{j=2}^{i-1}\dfrac{\theta_{i}-\theta_j}{\theta_{j}-\theta_1}
\biggl[1-(\theta_{j}-\theta_1)\sum\limits_{k=1}^{j-1}\omega_{k
j}^2\biggl]\\
 =&
\sum_{j=2}^{i-1}\frac{(\theta_{i}-\theta_1)(\theta_{j+1}-\theta_j)}{(\theta_{j+1}-\theta_1)(\theta_{j}-\theta_1)}
\sum_{k=1}^{j-1}\biggl[1-\sum_{p=k+1}^{j}(\theta_{p}-\theta_1)\omega_{k
p}^2\biggl].\endaligned$$
\end{lemma}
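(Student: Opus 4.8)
The plan is to rewrite each side, after interchanging orders of summation, as one and the same single sum over a telescoping kernel. For $l\geq 2$ put $c_{l}:=1-(\theta_{l}-\theta_{1})\sum_{k=1}^{l-1}\omega_{kl}^{2}$; this is precisely the bracketed factor appearing on the left-hand side, so the left-hand side equals $\sum_{j=2}^{i-1}\frac{\theta_{i}-\theta_{j}}{\theta_{j}-\theta_{1}}\,c_{j}$. The first step is to identify the inner double sum on the right-hand side: interchanging $\sum_{k=1}^{j-1}\sum_{p=k+1}^{j}$ into $\sum_{p=2}^{j}\sum_{k=1}^{p-1}$ gives
$$\sum_{k=1}^{j-1}\biggl[1-\sum_{p=k+1}^{j}(\theta_{p}-\theta_{1})\omega_{kp}^{2}\biggr]=(j-1)-\sum_{p=2}^{j}(\theta_{p}-\theta_{1})\sum_{k=1}^{p-1}\omega_{kp}^{2}=\sum_{l=2}^{j}c_{l},$$
so the right-hand side equals $(\theta_{i}-\theta_{1})\sum_{j=2}^{i-1}\frac{\theta_{j+1}-\theta_{j}}{(\theta_{j+1}-\theta_{1})(\theta_{j}-\theta_{1})}\sum_{l=2}^{j}c_{l}$.

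The second ingredient is the partial-fraction identity $\frac{\theta_{j+1}-\theta_{j}}{(\theta_{j+1}-\theta_{1})(\theta_{j}-\theta_{1})}=\frac{1}{\theta_{j}-\theta_{1}}-\frac{1}{\theta_{j+1}-\theta_{1}}$. Substituting it and then interchanging the order of summation over $j$ and $l$, the right-hand side becomes
$$(\theta_{i}-\theta_{1})\sum_{l=2}^{i-1}c_{l}\sum_{j=l}^{i-1}\biggl(\frac{1}{\theta_{j}-\theta_{1}}-\frac{1}{\theta_{j+1}-\theta_{1}}\biggr)=(\theta_{i}-\theta_{1})\sum_{l=2}^{i-1}c_{l}\biggl(\frac{1}{\theta_{l}-\theta_{1}}-\frac{1}{\theta_{i}-\theta_{1}}\biggr),$$
since the inner sum over $j$ telescopes. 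Expanding term by term, $(\theta_{i}-\theta_{1})c_{l}\bigl(\frac{1}{\theta_{l}-\theta_{1}}-\frac{1}{\theta_{i}-\theta_{1}}\bigr)=\frac{(\theta_{i}-\theta_{1})-(\theta_{l}-\theta_{1})}{\theta_{l}-\theta_{1}}\,c_{l}=\frac{\theta_{i}-\theta_{l}}{\theta_{l}-\theta_{1}}\,c_{l}$, and summing over $l$ from $2$ to $i-1$ recovers exactly the left-hand side. This proves the lemma.

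I do not expect a genuine obstacle here: the work is essentially the same rearrangement mechanism as in Lemma 2.1 and Lemma 2.2, the one new element being the partial-fraction identity for $\frac{\theta_{j+1}-\theta_{j}}{(\theta_{j+1}-\theta_{1})(\theta_{j}-\theta_{1})}$. The only point requiring attention is the index bookkeeping in the two interchanges of summation and in the telescoping; the argument is uniform in $i$ (for $i=2$ all the sums in play are empty, and for $i=3$ they reduce to the single term $l=2$, where the asserted identity is immediate).
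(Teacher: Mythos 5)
Your proof is correct and is essentially the paper's own argument --- a summation by parts (Abel summation) based on the cumulative sums $S_j=\sum_{l=2}^{j}c_l=\sum_{k=1}^{j-1}\bigl[1-\sum_{p=k+1}^{j}(\theta_p-\theta_1)\omega_{kp}^2\bigr]$ --- merely run in the opposite direction (from the right-hand side back to the left), with the coefficient identity phrased as the partial fraction $\frac{\theta_{j+1}-\theta_j}{(\theta_{j+1}-\theta_1)(\theta_j-\theta_1)}=\frac{1}{\theta_j-\theta_1}-\frac{1}{\theta_{j+1}-\theta_1}$ instead of as the difference $\frac{\theta_i-\theta_j}{\theta_j-\theta_1}-\frac{\theta_i-\theta_{j+1}}{\theta_{j+1}-\theta_1}$ that the paper collects after reindexing. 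The interchange of summation, the telescoping, and the degenerate cases $i\le 3$ are all handled correctly, so there is no gap.
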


\proof \ \ For $1\leq i \leq m+2$, we have
$$\aligned
&\sum\limits_{j=2}^{i-1}\dfrac{\theta_{i}-\theta_j}{\theta_{j}-\theta_1}
\biggl[1-(\theta_{j}-\theta_1)\sum\limits_{k=1}^{j-1}\omega_{k
j}^2\biggl]\\
=&\sum\limits_{j=2}^{i-1}\dfrac{\theta_{i}-\theta_j}{\theta_{j}-\theta_1}
\sum_{k=1}^{j-1}\biggl[1-\sum\limits_{p=k+1}^{j}(\theta_{p}-\theta_1)\omega_{k
p}^2\biggl]
-\sum\limits_{j=3}^{i-1}\dfrac{\theta_{i}-\theta_j}{\theta_{j}-\theta_1}
\sum_{k=1}^{j-2}\biggl[1-\sum\limits_{p=k+1}^{j-1}(\theta_{p}-\theta_1)\omega_{k
p}^2\biggl]\\
=&\sum\limits_{j=2}^{i-1}\dfrac{\theta_{i}-\theta_j}{\theta_{j}-\theta_1}
\sum_{k=1}^{j-1}\biggl[1-\sum\limits_{p=k+1}^{j}(\theta_{p}-\theta_1)\omega_{k
p}^2\biggl]
-\sum\limits_{j=2}^{i-2}\dfrac{\theta_{i}-\theta_{j+1}}{\theta_{j+1}-\theta_1}
\sum_{k=1}^{j-1}\biggl[1-\sum\limits_{p=k+1}^{j}(\theta_{p}-\theta_1)\omega_{k
p}^2\biggl]\\
=&\frac{\theta_{i}-\theta_{i-1}}{\theta_{i-1}-\theta_1}\sum_{k=1}^{i-2}\biggl[1-\sum_{p=k+1}^{i-1}(\theta_{p}-\theta_1)\omega_{k
p}^2\biggl]+\sum_{j=2}^{i-2}\frac{(\theta_{i}-\theta_1)(\theta_{j+1}-\theta_j)}{(\theta_{j+1}-\theta_1)(\theta_{j}-\theta_1)}
\sum_{k=1}^{j-1}\biggl[1-\sum_{p=k+1}^{j}(\theta_{p}-\theta_1)\omega_{k
p}^2\biggl]\\
 =&
\sum_{j=2}^{i-1}\frac{(\theta_{i}-\theta_1)(\theta_{j+1}-\theta_j)}{(\theta_{j+1}-\theta_1)(\theta_{j}-\theta_1)}
\sum_{k=1}^{j-1}\biggl[1-\sum_{p=k+1}^{j}(\theta_{p}-\theta_1)\omega_{k
p}^2\biggl].\endaligned$$\endproof

\vskip 2mm \noindent {\it Proof of Theorem 1.1.} If
$\dfrac{\lambda_2}{\lambda_1}<2-\dfrac{\lambda_1}{\lambda_i}$, we
know that the theorem 1.1 is proved. Hence, we assume
$\dfrac{\lambda_2}{\lambda_1}\geq 2-\dfrac{\lambda_1}{\lambda_i}$.
Taking $k=n+1, \ l=i-1$ in (2.2), we have
\begin{equation}
\sum_{\alpha=1}^{n}(\lambda_{\alpha+1}-\lambda_1)+B\leq
3\lambda_1+\dfrac{\lambda_1^{2}}{\lambda_i}+C,
\end{equation}
where
$$\aligned
B&=\sum_{\alpha=1}^{n}\frac{\lambda_{n+2}-\lambda_{1}}{1+\sum\limits_{j=\alpha+1}^{n+1}
(\lambda_{n+2}-\lambda_{j})a_{\alpha
j}^{2}}-\sum_{\alpha=1}^{n}(\lambda_{\alpha+1}-\lambda_1),\\
C&=\frac{(\lambda_{i}-\lambda_{1})\sum\limits_{j=2}^{i-1}\dfrac{\lambda_{i}-\lambda_j}{\lambda_{j}-\lambda_1}
\biggl[1-(\lambda_{j}-\lambda_1)\sum\limits_{\alpha=1}^{j-1}a_{\alpha
j}^2\biggl]}{\dfrac{\lambda_i}{\lambda_1}\biggl\{\dfrac{\lambda_i}{\lambda_1}
+\sum\limits_{j=2}^{i-1}\dfrac{\lambda_{i}-\lambda_j}{\lambda_{j}-\lambda_1}
\biggl[1-(\lambda_{j}-\lambda_1)\sum\limits_{\alpha=1}^{j-1}a_{\alpha
j}^2\biggl]\biggl\}}.
\endaligned$$
It follows from the lemma 2.1 that
\begin{equation}
B=\sum_{\beta=1}^{n}\sum_{\alpha=1}^{n+1-\beta}
\frac{(\lambda_{\alpha+\beta+1}-\lambda_{\alpha+\beta})\biggl[1-\sum\limits_{p=\alpha+1}^{\alpha+\beta}(\lambda_p-\lambda_1)a_{\alpha
p}^{2}\biggl]}{\biggl[1+\sum\limits_{p=\alpha+1}^{\alpha+\beta-1}(\lambda_{\alpha+\beta}-\lambda_p)a_{\alpha
p}^{2}\biggl]\cdot\biggl[1+\sum\limits_{p=\alpha+1}^{\alpha+\beta}(\lambda_{\alpha+\beta+1}-\lambda_p)a_{\alpha
p}^{2}\biggl]}.\end{equation}

For any positive integer $\gamma$, from (2.7), we can get
$$\sum_{p=\alpha+1}^{\gamma}(\lambda_p-\lambda_1)a_{\alpha
p}^{2}\leq 1.$$ Then, we have the following recursion formula:
$$\aligned
&1+\sum\limits_{p=\alpha+1}^{\gamma}(\lambda_{\gamma+1}-\lambda_p)a_{\alpha
p}^{2}\\
=~&
1+\frac{\lambda_{\gamma+1}-\lambda_{\gamma}}{\lambda_{\gamma}-\lambda_1}
\sum_{p=\alpha+1}^{\gamma}(\lambda_p-\lambda_1)a_{\alpha
p}^{2}-\frac{\lambda_{\gamma+1}-\lambda_{\gamma}}{\lambda_{\gamma}-\lambda_1}
\sum_{p=\alpha+1}^{\gamma-1}(\lambda_p-\lambda_1)a_{\alpha p}^{2}
+\sum\limits_{p=\alpha+1}^{\gamma-1}(\lambda_{\gamma+1}-\lambda_p)a_{\alpha
p}^{2}\\
\leq ~ &
1+\frac{\lambda_{\gamma+1}-\lambda_{\gamma}}{\lambda_{\gamma}-\lambda_1}
-\frac{\lambda_{\gamma+1}-\lambda_{\gamma}}{\lambda_{\gamma}-\lambda_1}
\sum_{p=\alpha+1}^{\gamma-1}(\lambda_p-\lambda_1)a_{\alpha p}^{2}
+\sum\limits_{p=\alpha+1}^{\gamma-1}(\lambda_{\gamma+1}-\lambda_p)a_{\alpha
p}^{2}\\
=~&
\frac{\lambda_{\gamma+1}-\lambda_{1}}{\lambda_{\gamma}-\lambda_1}\biggl[1+
\sum_{p=\alpha+1}^{\gamma-1}(\lambda_{\gamma}-\lambda_{p})a_{\alpha
p}^{2}\biggl].
\endaligned$$
Therefore, we can obtain
$$\aligned
&1+\sum\limits_{p=\alpha+1}^{\alpha+\beta}(\lambda_{\alpha+\beta+1}-\lambda_p)a_{\alpha
p}^{2}\\
\leq~&\frac{\lambda_{\alpha+\beta+1}-\lambda_{1}}{\lambda_{\alpha+\beta}-\lambda_1}
\cdot\frac{\lambda_{\alpha+\beta}-\lambda_{1}}{\lambda_{\alpha+\beta-1}-\lambda_1}\cdots
\frac{\lambda_{\alpha+2}-\lambda_{1}}{\lambda_{\alpha+1}-\lambda_1}
=\frac{\lambda_{\alpha+\beta+1}-\lambda_{1}}{\lambda_{\alpha+1}-\lambda_1}.\endaligned$$
Taking analogous arguments as the above inequality, we can obtain
$$
1+\sum\limits_{p=\alpha+1}^{\alpha+\beta-1}(\lambda_{\alpha+\beta}-\lambda_p)a_{\alpha
p}^{2}\leq\frac{\lambda_{\alpha+\beta}-\lambda_{1}}{\lambda_{\alpha+1}-\lambda_1}.$$

From (2.25), the above inequalities and the lemma 2.2, we have
$$\aligned
B\geq &\sum_{\beta=1}^{n}\sum_{\alpha=1}^{n+1-\beta}
\frac{(\lambda_{\alpha+\beta+1}-\lambda_{\alpha+\beta})(\lambda_{\alpha+1}-\lambda_{1})^{2}}
{(\lambda_{\alpha+\beta+1}-\lambda_{1})(\lambda_{\alpha+\beta}-\lambda_{1})}
\biggl[1-\sum\limits_{p=\alpha+1}^{\alpha+\beta}(\lambda_p-\lambda_1)a_{\alpha
p}^{2}\biggl]\\
\geq &\sum_{\beta=1}^{n}\sum_{\alpha=1}^{n+1-\beta}
\frac{(\lambda_{\alpha+\beta+1}-\lambda_{\alpha+\beta})(\lambda_{2}-\lambda_{1})^{2}}
{(\lambda_{\alpha+\beta+1}-\lambda_{1})(\lambda_{\alpha+\beta}-\lambda_{1})}
\biggl[1-\sum\limits_{p=\alpha+1}^{\alpha+\beta}(\lambda_p-\lambda_1)a_{\alpha
p}^{2}\biggl]\\
=&\sum_{\beta=2}^{n+1}
\frac{(\lambda_{\beta+1}-\lambda_{\beta})(\lambda_{2}-\lambda_{1})^{2}}
{(\lambda_{\beta+1}-\lambda_{1})(\lambda_{\beta}-\lambda_{1})}
\sum_{\alpha=1}^{\beta-1}\biggl[1-\sum\limits_{p=\alpha+1}^{\beta}(\lambda_p-\lambda_1)a_{\alpha
p}^{2}\biggl].
\endaligned
$$

 On the other hand, for $\dfrac{\lambda_2}{\lambda_1}\geq 2-\dfrac{\lambda_1}{\lambda_i}$, we have
 $$\frac{\lambda_i}{\lambda_1}\geq \frac{\lambda_i-\lambda_1}{\lambda_2-\lambda_1}.$$
 Hence, from the lemma 2.3, we deduce
$$\aligned
C&\leq\frac{(\lambda_{i}-\lambda_{1})}{\biggl(\dfrac{\lambda_i}{\lambda_1}\biggl)^{2}}
\sum\limits_{j=2}^{i-1}\dfrac{\lambda_{i}-\lambda_j}{\lambda_{j}-\lambda_1}
\biggl[1-(\lambda_{j}-\lambda_1)\sum\limits_{\alpha=1}^{j-1}a_{\alpha
j}^2\biggl]\\
&=
\frac{(\lambda_{i}-\lambda_{1})}{\biggl(\dfrac{\lambda_i}{\lambda_1}\biggl)^{2}}
\sum_{j=2}^{i-1}\frac{(\lambda_{i}-\lambda_1)(\lambda_{j+1}-\lambda_j)}{(\lambda_{j+1}-\lambda_1)(\lambda_{j}-\lambda_1)}
\sum_{\alpha=1}^{j-1}\biggl[1-\sum_{p=\alpha+1}^{j}(\lambda_{p}-\lambda_1)a_{\alpha
p}^2\biggl]\\
&\leq
\sum_{j=2}^{i-1}\frac{(\lambda_{2}-\lambda_1)^{2}(\lambda_{j+1}-\lambda_j)}{(\lambda_{j+1}-\lambda_1)(\lambda_{j}-\lambda_1)}
\sum_{\alpha=1}^{j-1}\biggl[1-\sum_{p=\alpha+1}^{j}(\lambda_{p}-\lambda_1)a_{\alpha
p}^2\biggl]\\
&\leq
\sum_{j=2}^{n+1}\frac{(\lambda_{2}-\lambda_1)^{2}(\lambda_{j+1}-\lambda_j)}{(\lambda_{j+1}-\lambda_1)(\lambda_{j}-\lambda_1)}
\sum_{\alpha=1}^{j-1}\biggl[1-\sum_{p=\alpha+1}^{j}(\lambda_{p}-\lambda_1)a_{\alpha
p}^2\biggl]\leq B.\\
\endaligned$$
Finally, we obtain,  from (2.24) and the above inequalities,
$$
\sum_{\alpha=1}^{n}(\lambda_{\alpha+1}-\lambda_1)\leq 3\lambda_1+\frac{\lambda_1^2}{\lambda_i}.
$$
This completes the proof of the theorem 1.1.
\endproof


\providecommand{\bysame}{\leavevmode\hbox
to3em{\hrulefill}\thinspace}

\end{document}